\title[c-vectros and dimension vectors for cluster-finite quivers]
{c-vectors and dimension vectors for\\
cluster-finite quivers}
\author{ALFREDO N\'AJERA CH\'AVEZ}
\address {Universit\'e Paris Diderot -- Paris 7\\
          Institut de Math\'ematiques de Jussieu\\
                   UMR 7586 du CNRS\\
                   Case 7012\\
                   B\^atiment Chevaleret\\
                   75205 Paris Cedex 13\\
           France}
\email {najera@math.jussieu.fr}
\numberwithin{equation}{section}
\newcommand{\ie}{{\em i.e. }}
\newcommand{\cf}{{\em cf.}\ }
\newtheorem{theorem}{Theorem}
\newtheorem{proposition}[theorem]{Proposition}
\newtheorem{corollary}[theorem]{Corollary}
\newtheorem{lemma and definition}[theorem]{Lemma and Definition}
\theoremstyle{definition}
\newtheorem{notation}[theorem]{Notation}
\newtheorem{remark}[theorem]{Remark}
\newtheorem{example}[theorem]{Example}
\newtheorem{definition}[theorem]{Definition}
\newcommand{\opname}[1]{\operatorname{\mathsf{#1}}}
\renewcommand{\mod}{\opname{mod}\nolimits}
\newcommand{\op}{^{op}}
\newcommand{\dimv}{\underline{\dim}\,}
\newcommand{\Z}{\mathbb{Z}}
\newcommand{\T}{\mathbb{T}}
\newcommand{\Hom}{\opname{Hom}}
\newcommand{\End}{\opname{End}}
\newcommand{\Ext}{\opname{Ext}}
\newcommand{\cc}{{\mathcal C}}
\newcommand{\cd}{{\mathcal D}}
\begin{document}

\maketitle

\begin{abstract}
Let $(Q,W)$ be a quiver with a non degenerate potential. We give a new description of the \textbf{c}-vectors of $Q$. We use it to show that, if $Q$ is mutation equivalent to a Dynkin quiver, then the set of positive $\mathbf{c}$-vectors of the cluster algebra associated to $Q^{\text{op}}$ coincides with the set of dimension vectors of the indecomposable modules over the Jacobian algebra of $(Q,W)$.
\end{abstract}

\section{Introduction}

Since the introduction of cluster categories in  \cite{BMRRT}, \cf also \cite{CCS}, it has been an interesting problem to find representation theoretic lifts of the concepts and constructions surrounding cluster algebras. In this note, we focus our attention on the \textbf{c}-vectors associated to a quiver and continue the approach started in \cite{Nájera}. We recall a fundamental fact first proved in \cite{QP 2}, namely, each \textbf{c}-vector is non-zero and all its entries are either  nonpositive or nonnegative.  From \cite[Section 8]{Nagao} (see also \cite[Theorem 4]{Nájera}) we know that positive \textbf{c}-vectors associated to a quiver $Q$ are always dimension vectors of indecomposable rigid modules over an appropriate algebra, namely, the Jacobian algebra $J(Q,W)$ associated to $Q^{\op}$ and a generic potential $W^{op}$ on it. We denote by $\Phi^{re,Sch}(Q,W)$ the set of these dimension vectors and call its elements the \emph{real Schur roots} associated to $(Q,W)$.

Let $\underline{\mathbf{c}}_+(Q)$ be the set of positive \textbf{c}-vectors associated to $Q$. We are interested in determining when the following equality holds
\begin{equation} \label{main equality}
\underline{\mathbf{c}}_+(Q)=\Phi^{re,Sch}(Q,W).
\end{equation}
A first step in this direction was accomplished in \cite{Nájera} where we proved that \eqref{main equality} holds if $Q$ is acyclic. In this note, we introduce the so-called  \textbf{c}-modules (see \thref{c-modules}) which allow us to deduce \eqref{main equality}  for all cluster-finite (skew-symmetric) cluster algebras. This qualitative result complements the work of T. Nakanishi and S. Stella \cite{Nakanishi-Stella}, who gave an explicit diagrammatic description of the \textbf{c}-vectors for cluster-finite (skew-symmetrizable) cluster algebras.

\section{\textbf{c}-modules}
Throughout this note, we will freely use the concepts from the study of cluster categories. We refer  the the reader to \cite{QP 1} for details concerning quivers with potentials (QP's for short), to \cite{Amiot,BMRRT,Plamondon} for the definition and further properties of cluster categories and to \cite{Keller-Yang} \cite{Nagao} and section 7 of \cite{Keller CD} for the construction of distinguished triangulated equivalences induced by QP mutation.

\begin{notation}
Let $(Q,W)$ be a QP with a non degenerate potential $W$. Let $n$ be the number of vertices of $Q$. We denote by $\Gamma$ (resp. $\cc$) the associated Ginzburg dg-algebra (resp. cluster category). Whenever there are unspecified morphisms or indices, we assume they are the ``most natural" ones, for instance, in the expression 
\begin{equation*}
\bigoplus_{i \rightarrow j} \Gamma_i \longrightarrow \Gamma_j
\end{equation*}
the sum is taken over all arrows in $Q$ ending at $j$ and the morphism is given by left multiplication by the arrows. In particular, associated to every cluster tilting object $T'=\bigoplus T'_i$ in $\cc$, we have exchange triangles 
\begin{equation*}
\xymatrix{\displaystyle
T'^{\ast}_j  \longrightarrow \bigoplus_{i \rightarrow j} T'_i \longrightarrow T'_j \longrightarrow  \Sigma T'^{\ast}_j
& \text{and} &\displaystyle T'_j \longrightarrow \bigoplus_{j \rightarrow i} T'_i \longrightarrow T'^{\ast}_j \longrightarrow  \Sigma T'_j.
}
\end{equation*}
When there is no risk of confusion, we will abbreviate the expression $\Hom (X,Y)$ by $(X,Y)$. Finally, for each $i\in\Z$, denote by pr$\Sigma^i\Gamma$ the full subcategory of $\cd \Gamma$ whose objects are the cones of morphisms in add$\Sigma^i \Gamma$. 
\end{notation}

\begin{theorem}
\thlabel{c-modules}
Let $t$ be a vertex of the $n$-regular tree $\T_n $ and $T'=\bigoplus T'_i$ be the corresponding cluster-tilting object in $\cc$, \cf section 7.7 of \cite{Keller CD}. If $\underline{\mathbf{c}}_j(t)$ is positive, then it equals the dimension vector of the $J(Q,W)$-module 
\begin{equation}
\opname{coker}\left(\bigoplus_{j\rightarrow i}\Hom(T,\Sigma T'_i) \longrightarrow \Hom(T,\Sigma T'^{\ast}_j) \right).
\end{equation} \label{positive c-modules}
If $\underline{\mathbf{c}}_j(t)$ is negative, then its opposite equals the dimension vector of the $J(Q,W)$-module 
\begin{equation} \label{negative c-modules}
\opname{coker}\left(\bigoplus_{i\rightarrow j} \Hom(T,\Sigma T'_i) \longrightarrow \Hom(T,\Sigma T'_j) \right).
\end{equation} 
Moreover, for each pair $(t,j)$ exactly one of the modules described above is nonzero.
\end{theorem}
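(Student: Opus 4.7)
The plan is to apply the functor $F := \Hom_{\cc}(T, \Sigma -) \colon \cc \to \mod J(Q,W)$ to the two exchange triangles at $T'$. Applying $F$ to the second exchange triangle $T'_j \to \bigoplus_{j \to i} T'_i \to T'^{\ast}_j \to \Sigma T'_j$ produces the four-term exact sequence
\[
\Hom(T,\Sigma T'_j) \to \bigoplus_{j \to i} \Hom(T, \Sigma T'_i) \to \Hom(T, \Sigma T'^{\ast}_j) \to \Hom(T, \Sigma^2 T'_j)
\]
in $\mod J(Q,W)$, whence the cokernel in \eqref{positive c-modules} is identified with the image of the last map; denote it $M_j^+$. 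Symmetrically, applying $F$ to the first exchange triangle yields a module $M_j^-$ realizing the cokernel in \eqref{negative c-modules}.

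The next step is to identify $\dimv M_j^\pm$ with $\pm\mathbf{c}_j(t)$. I would proceed by induction on the distance from $t$ to the initial vertex $t_0$ in $\T_n$. At $t = t_0$ one has $T'_i = T_i$, hence $F(T'_i) = 0$, so $M_j^+ = \Hom(T, \Sigma T'^{\ast}_j) = S_j$ -- the simple $J(Q,W)$-module at $j$ -- with dim vector $e_j = \mathbf{c}_j(t_0)$, while $M_j^- = 0$ since $F(T'_j) = 0$. The inductive step relies on the triangulated autoequivalence of $\cd\Gamma$ induced by QP mutation (\cite{Keller-Yang}, \cite{Nagao}, section 7 of \cite{Keller CD}), which restricts to a compatible equivalence on $\cc$ intertwining the exchange triangles at adjacent seeds. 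Under this equivalence, the cokernels $M_j^\pm$ transform in step with the c-vector mutation, propagating both the dim-vector identity and the ``only one is non-zero'' property along $\T_n$. Alternatively, a non-inductive computation via the Dehy--Keller/Plamondon duality between the index in $K_0(\add T)$ and dim vectors in $K_0(\mod J(Q,W))$ expresses $\dimv M_j^+ - \dimv M_j^-$ directly as the $j$-th column of the $C$-matrix $C(t)$.

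The main obstacle is combining this inductive propagation with the sign-coherence of c-vectors (\cite{QP 2}) to confirm that for every $(t,j)$ exactly one of $M_j^+, M_j^-$ is non-zero. Even granted the identity $\dimv M_j^+ - \dimv M_j^- = \mathbf{c}_j(t)$, sign-coherence alone does not force one module to vanish; the additional input must come from the decorated-representation framework of \cite{QP 1}, in which positive and negative c-vectors correspond to disjoint types of indecomposable decorated modules, and the two cokernels realise the positive and negative parts respectively. Making this matching precise -- so that exactly the module of matching sign survives -- is where the bulk of the technical work is concentrated.
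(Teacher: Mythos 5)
Your opening step is consistent with the statement: the map in \eqref{positive c-modules} (resp.\ \eqref{negative c-modules}) is indeed the one induced by the exchange triangles, so applying $\Hom_{\cc}(T,\Sigma-)$ to them realises the two cokernels. But the two points where the actual content lies are left open in your proposal. First, the inductive propagation along $\T_n$ --- the claim that the cokernels ``transform in step with the c-vector mutation'' under the Keller--Yang equivalences --- is precisely what would have to be proved; the mutation rule for $\underline{\mathbf{c}}_j(t)$ involves a sign case distinction, and tracking how $M_j^{\pm}$ changes when $T'$ is mutated while $T$ stays fixed is nontrivial and is nowhere carried out. Second, you concede yourself that the assertion that exactly one of $M_j^+$, $M_j^-$ is nonzero does not follow from the identity $\dimv M_j^+-\dimv M_j^-=\underline{\mathbf{c}}_j(t)$ together with sign-coherence of \cite{QP 2}, and the appeal to decorated representations remains a pointer rather than an argument. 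As it stands the proposal establishes neither the dimension-vector identity nor the vanishing statement.

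The paper's proof avoids the induction entirely and supplies exactly the missing input categorically. One uses the known interpretation of c-vectors (\cite{Nagao}, \cite{Keller-Yang}, section 7 of \cite{Keller CD}): if $F\colon\cd\Gamma'\to\cd\Gamma$ is the triangle equivalence attached to the mutation sequence from the initial vertex to $t$, normalised so that $\cd_{\leq 0}\Gamma\subseteq F\cd_{\leq 0}\Gamma'\subseteq\cd_{\leq 1}\Gamma$ and $F\Gamma'$ lies in $\mathrm{pr}\,\Sigma^{-1}\Gamma$, then $\underline{\mathbf{c}}_j(t)$ is the class of $FS'_j$ in the Grothendieck group of the finite-dimensional derived category, and moreover $H^{\ast}(FS'_j)$ is concentrated either in degree $0$ or in degree $1$ --- a statement strictly stronger than sign-coherence of the class, and exactly what forces one of the two modules to vanish. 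The technical work (\thref{homology as cokernel}) is then to identify $H^0(FS'_j)$ with the cokernel in \eqref{positive c-modules} and $H^1(FS'_j)$ with the cokernel in \eqref{negative c-modules}: one forms the triangles on $\bigoplus_{i\to j}\Gamma'_i\to\Gamma'_j$ and $\Gamma'_j\to\bigoplus_{j\to i}\Gamma'_i$ in $\cd\Gamma'$, relates them to $S'_j$ by a triangle $\Sigma V_j\to U_j\to S'_j\to\Sigma^2V_j$, applies $F$ and the isomorphism $\Hom_{\cd\Gamma}(\Gamma,X)\cong\Hom_{\cc}(T,X)$ valid on the subcategories $\mathrm{pr}\,\Sigma^{i}\Gamma$. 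Your purely $\cc$-level computation cannot see the simple $S'_j$ at all, which is why your route is forced into the unproved induction; importing the identification $\underline{\mathbf{c}}_j(t)=[FS'_j]$ together with the one-degree concentration of its homology (or an equivalent result) is indispensable to close the gap.
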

For obvious reasons we call modules of this form \textbf{c}-modules. \thref{c-modules} is a direct consequence of the following proposition and the fact that $H^{\ast}(S_j(t))$ is concentrated either in degree $0$ or degree $1$. Let $(Q',W')$ be a QP which is mutation equivalent to $(Q,W)$. Denote by $\Gamma'$ the corresponding Ginzburg dg algebra and by $\cc'$ the corresponding cluster category.

\begin{proposition}
\thlabel{homology as cokernel}
Let $F:\cd \Gamma' \tilde{\longrightarrow} \cd \Gamma$ be a triangle equivalence such that  $ \cd_{\leq 0}\Gamma \subseteq F\cd_{\leq 0}\Gamma' \subseteq \cd_{\leq 1}\Gamma$ and that $F(\Gamma')$ lies in pr$\Sigma^{-1}\Gamma$. Suppose that $T'$, the image of $F(\Gamma')$ in $\cc$, is a cluster tilting object. Then there are short exact sequences
\begin{equation*}
\xymatrix{\displaystyle
\bigoplus_{j\rightarrow i} \Hom_{\cc}(T, \Sigma T'_i)\ar[r] & \Hom_{\cc}(T,\Sigma T'^{\ast}_j)\ar[r] & H^0(FS'_j)\ar[r] & 0\phantom{.}
}
\end{equation*}
and
\begin{equation*}
\xymatrix{
 \displaystyle{\bigoplus_{i\rightarrow j}} \Hom_{\cc}(T, \Sigma T'_i)\ar[r] &  \Hom_{\cc}(T,\Sigma T'_j) \ar[r] & H^1(FS_j)\ar[r] & 0.
}
\end{equation*}
\end{proposition}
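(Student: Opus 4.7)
The plan is to apply the cohomological functor $\Hom_\cc(T,\Sigma(-))$ to triangles in $\cd\Gamma$ obtained from $\cd\Gamma'$ via $F$, using Plamondon's description of $\cc$ to identify $\Hom_\cc(T,\Sigma FS'_j)$ with the unique nonzero cohomology of $FS'_j$ and then reading off the cokernels in the statement.

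First I would work in $\cd\Gamma'$. Besides the two exchange-like triangles
\[
\Gamma'_j \to \bigoplus_{j\to i}\Gamma'_i \to \Gamma'^*_j \to \Sigma\Gamma'_j
\quad\text{and}\quad
\Gamma'^*_j \to \bigoplus_{i\to j}\Gamma'_i \to \Gamma'_j \to \Sigma\Gamma'^*_j,
\]
the simple $S'_j$ carries a length-$3$ dg-projective resolution with terms $\Sigma^{-3}\Gamma'_j$, $\bigoplus_{i\to j}\Sigma^{-2}\Gamma'_i$, $\bigoplus_{j\to i}\Sigma^{-1}\Gamma'_i$, $\Gamma'_j$, coming from the $3$-Calabi--Yau property of $\Gamma'$. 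An octahedral splice of this resolution with each exchange triangle yields an auxiliary triangle in $\cd\Gamma'$ relating $S'_j$, $\Gamma'^*_j$, and the arrow summands $\bigoplus\Gamma'_i$ out of $j$ (respectively $S'_j$, $\Gamma'_j$ and the arrows into $j$). These two auxiliary triangles are the source of the first (respectively second) sequence in the statement.

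Second I would apply $F$ to transport these triangles to $\cd\Gamma$, where the images in $\cc$ of $\Gamma'_i, \Gamma'_j, \Gamma'^*_j$ are by construction $T'_i, T'_j, T'^*_j$, and then apply $\Hom_\cc(T,\Sigma(-))$ to each. The hypotheses on $F$ play two roles: first, $F(\Gamma')\in\text{pr}\Sigma^{-1}\Gamma$ places $T'_i, T'^*_j$ in Plamondon's fundamental domain in $\per\Gamma$, which is the input needed for the cohomological identification $\Hom_\cc(T,\Sigma FS'_j) \cong H^i(FS'_j)$ when $FS'_j$ is concentrated in degree $i$; and second, the bounds $\cd_{\leq 0}\Gamma \subseteq F\cd_{\leq 0}\Gamma'\subseteq\cd_{\leq 1}\Gamma$, combined with $S'_j$ being concentrated in degree $0$, force $FS'_j$ to live in degrees $0$ and $1$ only. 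The long exact sequence then truncates to the claimed short exact sequence for the appropriate cohomology.

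The principal obstacle will be producing the two auxiliary triangles in $\cd\Gamma'$ with the correct connecting morphisms; this requires a careful octahedral chase combining the truncated resolution of $S'_j$ with each exchange triangle while keeping track of all identifications. A secondary technical point is formalizing the cohomology identification $\Hom_\cc(T,\Sigma FS'_j)\cong H^i(FS'_j)$, which I expect to reduce to standard properties of Plamondon's fundamental domain together with the fact that the canonical functor $\cc\to\mod J(Q,W)$ restricts to the identity on modules viewed inside $\cc$.
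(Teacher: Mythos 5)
Your first half is essentially the paper's construction: the paper also splices the self-dual resolution of the simple into two cones, namely $U_j=\opname{cone}\bigl(\bigoplus_{i\to j}\Gamma'_i\to\Gamma'_j\bigr)$ and $V_j=\opname{cone}\bigl(\Gamma'_j\to\bigoplus_{j\to i}\Gamma'_i\bigr)$, together with a triangle $\Sigma V_j\to U_j\to S'_j\to\Sigma^2 V_j$ in $\cd\Gamma'$, and uses $F\cd_{\leq 0}\Gamma'\subseteq\cd_{\leq 1}\Gamma$ to truncate the resulting long exact sequence. So the ``octahedral splice'' part is fine in spirit (note only this one inclusion is needed here; the other inclusion is used later, in the theorem, to know that $H^*(FS'_j)$ sits in a single degree).

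The genuine gap is your central identification $\Hom_{\cc}(T,\Sigma FS'_j)\cong H^i(FS'_j)$, on which the whole second half rests. The object $FS'_j$ has finite-dimensional total homology, and the cluster category is obtained from $\per\Gamma$ precisely by annihilating that subcategory; hence the image of $FS'_j$ in $\cc$ is zero and $\Hom_{\cc}(T,\Sigma FS'_j)=0$. Plamondon's comparison of morphism spaces is only valid for pairs of objects lying in a common subcategory $\mathrm{pr}\,\Sigma^i\Gamma$, which $FS'_j$ does not satisfy; so ``applying $\Hom_{\cc}(T,\Sigma(-))$ to the transported triangles'' cannot detect $S'_j$ at all, and the long exact sequence you want degenerates. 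The correct step, as in the paper, is to keep the computation of $H^\bullet(FS'_j)$ inside $\cd\Gamma$, i.e.\ apply $\Hom_{\cd\Gamma}(\Gamma,-)$ to the image of the triangle $\Sigma V_j\to U_j\to S'_j$, and to invoke the isomorphism $\Hom_{\cd\Gamma}(X,Y)\cong\Hom_{\cc}(X,Y)$ only on the terms that do lie in $\mathrm{pr}\,\Sigma^{-1}\Gamma$, respectively $\mathrm{pr}\,\Gamma$ (namely $F\Gamma'_i$ and $\Sigma F\Gamma'_i$); a five-lemma argument on the comparison diagram between the long exact sequence in $\cd\Gamma$ and the one obtained from the exchange triangle in $\cc$ then yields $H^0(FU_j)\cong\Hom_{\cc}(T,\Sigma T'^{\ast}_j)$, a presentation of $H^1(FU_j)$ by $\bigoplus_{i\to j}\Hom_{\cc}(T,\Sigma T'_i)\to\Hom_{\cc}(T,\Sigma T'_j)$, and a surjection $\bigoplus_{j\to i}\Hom_{\cc}(T,\Sigma T'_i)\twoheadrightarrow H^1(FV_j)$, from which the two stated short exact sequences follow. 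Also note that there is no canonical object ``$\Gamma'^{\ast}_j$'' in $\cd\Gamma'$: the exchange triangles live in $\cc$, and it is exactly the cones $U_j$, $V_j$ that serve as their lifts.
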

\begin{proof}
Consider the triangles
\begin{equation}
\label{triangle for U}
\xymatrix{
\displaystyle{\bigoplus_{i \rightarrow j}} \Gamma'_i \ar[r] & \Gamma'_j \ar[r] & U_j \ar[r] & \Sigma \displaystyle{\bigoplus_{i \rightarrow j}} \Gamma'_i  
}
\end{equation}
and
\begin{equation}
\label{triangle for V}
\xymatrix{
\Gamma'_j \ar[r] & \displaystyle{\bigoplus_{j\rightarrow i}} \Gamma'_i \ar[r] & V_j \ar[r] & \Sigma \Gamma'_j.
}
\end{equation}
induced by the natural morphisms. Then, there is a triangle $\Sigma V_j \longrightarrow U_j \longrightarrow S'_j \longrightarrow \Sigma^2 V_j $ in $\cd \Gamma'$. If we consider the long exact sequence in homology induced by the image of this last triangle, then, since $F\cd_{\leq 0}\Gamma' \subseteq \cd_{\leq 1}\Gamma$, we obtain
\begin{equation}
\label{H^1 FS}
H^1(FU_j)  \cong H^1(FS_j)
\end{equation}
and the exact sequence
\begin{equation} 
\label{H^0 S}
\xymatrix{
H^1(FV_j) \ar[r] & H^0(FU_j) \ar[r] & H^0(FS_j) \ar[r] & 0.
}
\end{equation}
Recall that there is an isomorphism $\Hom_{\cd \Gamma}(X, Y) \tilde{\longrightarrow} \Hom_{\cc}(X, Y)$, whenever $X$ and $Y$ lie in pr$\Sigma^i \Gamma$ (for all $i\in \Z$). Since $F\Gamma'$ and $\Gamma$ (resp. $\Sigma F\Gamma' $ and $\Gamma$) belong to pr$\Sigma^{-1}\Gamma$ (resp. pr$\Gamma$), we can apply this to the image in $\cd \Gamma$ of the triangle \eqref{triangle for U} to construct the following diagram
\begin{equation*}
\xymatrix{
*-<0pt,15pt>+<5pt,0pt>{\displaystyle{\bigoplus^{\phantom{N}}_{i \rightarrow j}}(\Gamma,  F\Gamma'_i)} \ar[r] \ar[d]^{\cong} & (\Gamma,F\Gamma'_j)  \ar[r]  \ar[d]^-{\cong} & (\Gamma,FU_j) \ar[r]\ar[d]& *-<0pt,15pt>+<5pt,0pt>{\displaystyle{\bigoplus^{\phantom{N}}_{i \rightarrow j}}(\Gamma,  \Sigma F\Gamma'_i)} \ar[r] \ar[d]^-{\cong} &  (\Gamma,\Sigma F\Gamma'_j)\phantom{.}  \ar[d]^{\cong} \\
*-<0pt,18pt>+<5pt,0pt>{\displaystyle{\bigoplus^{\phantom{N}}_{i \rightarrow j}}(T,  T'_i)} \ar[r]  & (T, T'_j)  \ar[r]  & (T,\Sigma T'^{\ast}_j) \ar[r] &*-<0pt,18pt>+<5pt,0pt>{\displaystyle{\bigoplus^{\phantom{N}}_{i \rightarrow j}}(T, \Sigma T'_i)}\ar[r] & (T,\Sigma T'_j)
.}
\vspace{2mm}
\end{equation*}
We obtain an isomorphism
\begin{equation}
\label{H^0 FU}
H^0(FU_j) \cong \Hom_{\cc}(T,\Sigma T'^{\ast}_j)
\end{equation}
and an exact sequence
\begin{equation} \label{H^1 FU 2}
\xymatrix{
\displaystyle{\bigoplus_{i \rightarrow j}} \Hom_{\cc}(T,\Sigma T'_i) \ar[r] & \Hom_{\cc}(T,\Sigma T'_j) \ar[r] & H^1(FU_j) \ar[r] & 0.
}
\end{equation}
Applying a similar argument to the triangle \eqref{triangle for V}, we obtain an epimorphism
\begin{equation}
\label{H^1 FV}
\xymatrix{
 \displaystyle{\bigoplus_{j \rightarrow i}} \Hom_{\cc}(T, \Sigma T'_i)  \ar[r] & H^1(FV_j) \ar[r] & 0.
}
\end{equation}
Combining \eqref{H^0 S} with \eqref{H^0 FU} and \eqref{H^1 FV} we obtain the natural morphism $\displaystyle \bigoplus_{j\rightarrow i} \Hom_{\cc}(T, \Sigma T'_i) \longrightarrow \Hom_{\cc}(T,\Sigma T'^{\ast}_j)$ and thus the first exact sequence of the statement. We obtain the second sequence by combining  \eqref{H^1 FS} with \eqref{H^1 FU 2}.
\end{proof}

\begin{definition}
We say that a module $M$ over a finite dimensional algebra is \emph{$\tau$-rigid} if $\Hom(M,\tau M)=0$. Notice that the images in $\mod J(Q,W)$ of the rigid indecomposable objects in $\cc$ under the canonical functor $\Hom(\Gamma,\ )$ are the $\tau$-rigid indecomposable modules, \cf section 3.5 of \cite{Keller Reiten}. 
\end{definition}

\begin{corollary}\thlabel{source}
Let $(Q,W)$ be a Jacobi-finite quiver with potential. Suppose $M$ is a $\tau$-rigid indecomposable $J(Q,W)$-module and that $M \cong \Hom_{\cc}(T, \tilde{M})$ for an object $\tilde{M}$ of $\cc$. If there is a cluster tilting object $T'$ in $\cc$ such that $\tilde{M}\cong \Sigma T'_j$ where $T'_j$ is a source of $T'$, then $\dimv(M)$ is a $\mathbf{c}$-vector.
\end{corollary}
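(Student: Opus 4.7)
The plan is to mutate $T'$ at the vertex $j$ and apply the positive \textbf{c}-module formula of \thref{c-modules} at the new cluster-tilting object. Fix a vertex $t'$ of $\T_n$ whose cluster-tilting object is $T'$, write $T''$ for the mutation of $T'$ at $j$ (so $T''_j=T'^{\ast}_j$ and $T''_i=T'_i$ for $i\ne j$), and let $t''$ be the vertex of $\T_n$ adjacent to $t'$ across the edge labeled $j$. The goal will be to identify $\dimv(M)$ with $\underline{\mathbf{c}}_j(t'')$.

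I would first unpack the source hypothesis. Since there are no arrows $i\to j$ in the Gabriel quiver of $\End_{\cc}(T')$, the middle term of the exchange triangle $T'^{\ast}_j \to \bigoplus_{i\to j} T'_i \to T'_j \to \Sigma T'^{\ast}_j$ is zero, so $\Sigma T'^{\ast}_j \cong T'_j$. Moreover, quiver mutation at a source reverses the incident arrows and produces neither new arrows nor $2$-cycles (no paths of length $2$ pass through $j$), so the vertex $j$ becomes a sink in the Gabriel quiver of $\End_{\cc}(T'')$. Involutivity of mutation also gives $T''^{\ast}_j=T'_j$, hence $\Sigma T''^{\ast}_j \cong \tilde M$.

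I would then apply formula (\ref{positive c-modules}) of \thref{c-modules} to the pair $(t'',j)$: the direct sum indexed by arrows $j\to i$ in the quiver of $T''$ is empty because $j$ is a sink, so the cokernel collapses to $\Hom_{\cc}(T,\Sigma T''^{\ast}_j) = \Hom_{\cc}(T,\tilde M) = M$, which is nonzero by hypothesis. The ``exactly one of the two modules is nonzero'' clause of \thref{c-modules} then forces $\underline{\mathbf{c}}_j(t'')$ to be positive and equal to $\dimv(M)$, as desired.

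The only point requiring some care is the compatibility between combinatorial quiver mutation at a source and the categorical/QP mutation producing $T''$ and its exchange triangles; this is standard (\cf \cite{QP 1} and \cite{Keller CD}) and would simply be cited. Note that one could alternatively reach the same conclusion by directly evaluating the \emph{negative} c-module formula at $(t',j)$ (whose indexing sum is already empty by the source hypothesis), deducing $\underline{\mathbf{c}}_j(t') = -\dimv(M)$, and then invoking the mutation rule $\underline{\mathbf{c}}_j(t'') = -\underline{\mathbf{c}}_j(t')$ for c-vectors; the route above has the advantage of being self-contained within \thref{c-modules}.
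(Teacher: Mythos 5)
Your proposal is correct and is essentially the intended argument: the paper states the corollary without a written proof precisely because it follows from \thref{c-modules} in the way you describe, with the source/sink hypothesis emptying the indexing sum so that the c-module collapses to $M\neq 0$, and the ``exactly one nonzero'' clause plus sign-coherence pinning down the sign. Your main route (mutating to make $j$ a sink and reading off the positive formula) and your noted alternative (the negative formula at $t'$ combined with $\underline{\mathbf{c}}_j(\mu_j t')=-\underline{\mathbf{c}}_j(t')$) are both faithful to the paper's setup, granted the same implicit assumption the paper makes that $T'$ corresponds to a vertex of $\T_n$.
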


\begin{theorem} \thlabel{cluster-finite case}
Let $(Q,W)$ be a quiver with potential which is mutation equivalent to a Dynkin quiver $(\vec{\Delta},0)$ (i.e.~$Q$ is cluster-finite). Then the set of positive $\mathbf{c}$-vectors equals the set of dimension vectors of the indecomposable $J(Q,W)$-modules.
\end{theorem}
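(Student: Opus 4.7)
The plan is to prove both inclusions of the claimed equality.

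For $\underline{\mathbf{c}}_+(Q)\subseteq\{\dimv M : M\text{ indecomposable}\}$, I would invoke the fact recalled in the introduction: every positive $\mathbf{c}$-vector is the dimension vector of an indecomposable \emph{rigid} $J(Q,W)$-module, hence in particular of an indecomposable one.

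For the reverse inclusion, let $M$ be any indecomposable $J(Q,W)$-module. Because $\cc$ is of Dynkin type, it has only finitely many indecomposables and all of them are rigid, so the canonical functor $\Hom_\cc(T,-)$ lifts $M$ to a rigid indecomposable $\tilde M\in\cc$ with $M\cong\Hom_\cc(T,\tilde M)$. Set $Y:=\Sigma^{-1}\tilde M$, which is again rigid indecomposable. By \thref{source} it suffices to exhibit a cluster tilting object $T'=\bigoplus T'_i$ of $\cc$ with $Y\cong T'_j$ a source summand of $T'$; the resulting $\mathbf{c}$-vector is automatically positive since $\dimv(M)$ has nonnegative entries.

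The main obstacle is therefore the combinatorial claim that \emph{every rigid indecomposable $Y$ of a Dynkin-type cluster category $\cc$ is a source summand of some cluster tilting object}. I would establish this by first extending $Y$ to an arbitrary cluster tilting object $T''=Y\oplus\bigoplus_{k\neq j}T''_k$ (which exists by maximal rigidity in the $2$-Calabi--Yau category $\cc$), and then analysing how the quiver of $\End_\cc(T'')^{op}$ at the vertex $j$ changes as one mutates $T''$ at the remaining vertices. These mutations preserve $Y$ as a summand and traverse the finite, connected exchange subgraph of cluster tilting objects containing $Y$; by iteratively mutating at sources of arrows pointing into $Y$ -- or, alternatively, by appealing to the explicit diagrammatic description of exchange quivers for cluster-finite cluster algebras provided by Nakanishi and Stella \cite{Nakanishi-Stella} -- one eventually reaches a cluster tilting object $T'$ in which no arrows point into $Y$, that is, in which $Y$ is a source. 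Applying \thref{source} to this $T'$ yields $\dimv(M)\in\underline{\mathbf{c}}_+(Q)$ and completes the proof.
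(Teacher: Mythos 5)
Your overall route is the same as the paper's: lift an indecomposable module $M$ to a rigid indecomposable $\tilde M$ of $\cc$, pass to $Y=\Sigma^{-1}\tilde M$, and apply \thref{source} after realizing $Y$ as a source summand of a cluster tilting object, while the other inclusion is the known fact that positive $\mathbf{c}$-vectors are dimension vectors of indecomposable rigid modules. The gap is exactly at the step you yourself flag as the main obstacle: your argument that every rigid indecomposable $Y$ of a Dynkin-type cluster category is a source summand of some cluster tilting object is not a proof. If $j$ is the vertex of $Y$ in the quiver of $\End_{\cc}(T'')$, mutating $T''$ at a summand $T''_k$ with an arrow $k\to j$ reverses that arrow but creates a new arrow $a\to j$ for every arrow $a\to k$, so ``iteratively mutating at sources of arrows pointing into $Y$'' need not decrease the number of arrows into $j$, and you exhibit no invariant forcing the process to terminate at a quiver in which $j$ is a source. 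Finiteness and connectedness of the exchange subgraph of cluster tilting objects containing $Y$ only guarantee finitely many candidate quivers, not that one of them has $j$ as a source. The alternative appeal to \cite{Nakanishi-Stella} does not fill the hole either: that paper describes $\mathbf{c}$- and $\mathbf{d}$-vectors diagrammatically, not the quivers of cluster tilting objects, and relying on their explicit finite-type computation would in any case amount to assuming the kind of statement this theorem is meant to reprove qualitatively.

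What is actually needed, and what the paper invokes (via the proof of Proposition 3 of the author's earlier paper on acyclic cluster algebras), is an Auslander--Reiten-theoretic fact special to Dynkin type: the AR quiver of $\cc$ is $\Z\Delta$ modulo a group action, so through any indecomposable $Y$ one can choose a section (slice) of which $Y$ is a source; the direct sum of the indecomposables on this section is a cluster tilting object whose endomorphism quiver is that section, hence has $Y$ as a source summand. If you replace your mutation heuristic by this slice argument (or any other complete proof of the source-summand claim), the rest of your proposal goes through and coincides with the paper's proof.
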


\begin{proof}
Let $M$ be an indecomposable $J(Q,W)$-module. We can find an indecomposable object $X$ in $\cc_{Q.W}$ such that $\Hom_{\cc}(T,X)=M$. Since $\cc_{Q,W}$ is triangle equivalent to the usual cluster category $\cc_{\vec{\Delta}}$, the object $X$ must be rigid and thus $M$ is $\tau$-rigid (in particular rigid). It is easy to see that for Dynkin quivers we can obtain every rigid indecomposable object in the cluster category as source of a cluster tilting object (see the proof of Proposition 3 in \cite{Nájera}). Now, using Corollary \ref{source} we obtain the result.
\end{proof}

\begin{remark}
It follows that the sets of \textbf{c}-vectors of $Q$  and $Q^{\text{op}}$ coincide. Indeed, the sets of dimension vectors of indecomposable modules over $J(Q,W)$ and $J(Q,W)^{\text{op}}$ coincide.
\end{remark}

\section{examples}

We analyze two examples, one of them beyond cluster-finite type, to show how \thref{c-modules} can be used to understand families of \textbf{c}-vectors.

\begin{remark}
\thlabel{H and F morphisms}
(\cite{BMRRT}) Let $Q$ be a quiver mutation equivalent to an acyclic quiver $Q'$ and $k$ be a field. Recall that $\Hom_{\cc}(X,Y)= \Hom_{kQ'}(X,Y)\oplus \Ext^1_{kQ'}(X,\tau^{-1}Y)$ for any pair of objects $X$ and $ Y$ in $\cc$ and that only one of these summands is non-zero. We call the elements of $\Hom_{\cd^b(Q')}(X,Y)$ (resp. $\Ext^1_{kQ'}(X,\tau^{-1}Y)$) $H$-morphisms (resp. $F$-morphisms). 
\end{remark}

\begin{definition}
Let $U$ be a rigid indecomposable object of $\cc$ and $T'$ a reachable cluster-tilting object containing $U$ as a direct summand. Let $t$ be the vertex of $\T_n$ corresponding to $T'$ and $j$ the vertex of $Q$ corresponding to $U$. We write $\underline{\mathbf{c}}_{U,T'}$ for the \textbf{c}-vector $\underline{\mathbf{c}}_j(t)$. Denote by $\underline{\mathbf{c}}_U$ the set of vectors obtained in this way and by $\underline{\mathbf{c}}^-_U$ (resp. $\underline{\mathbf{c}}^+_U$) its subset of negative (resp. positive) vectors. Note that this notation depends on the orientation of Q. 
\end{definition}

\begin{example} (Type $\tilde{A}_{1,1}$)
Let $\cc$ be a cluster category of type $\tilde{A}_{1,1}$, \ie the cluster category associated with the quiver

\begin{equation*}
Q: \xymatrix{
&2 \ar[dr]& \\
1\ar[ru] \ar[rr]& &3.
}
\end{equation*}
Here, we let $T_i =P_i$, \ie the image in $\cc$ of the indecomposable projective $kQ$-module associated to the vertex $i$. We take $T=T_1\oplus T_2 \oplus T_3$ as the initial cluster tilting object and let $T'=\Sigma^{-1} T_1\oplus S_2 \oplus \Sigma^{-1} T_3$. We can visualize $\End_{\cc}(T')$ as follows 
\begin{equation*}
\xymatrix{
&S_2 \ar[dl]& \\
\Sigma^{-1}T_1\ar@<0.5ex>[rr] \ar@<-0.5ex>[rr]& &\Sigma^{-1}T_3. \ar[lu]
}
\end{equation*}
The cluster-tilting objects of the cluster category $\cc$ are in bijection with the vertices of the planar graph in Figure 1 and the indecomposable rigid objects with the connected components of its complement in the plane, \cf \cite{Clusters 2} \cite{Cerulli-Irelli}.
\begin{figure}[htbp]
\includegraphics[scale=0.65]{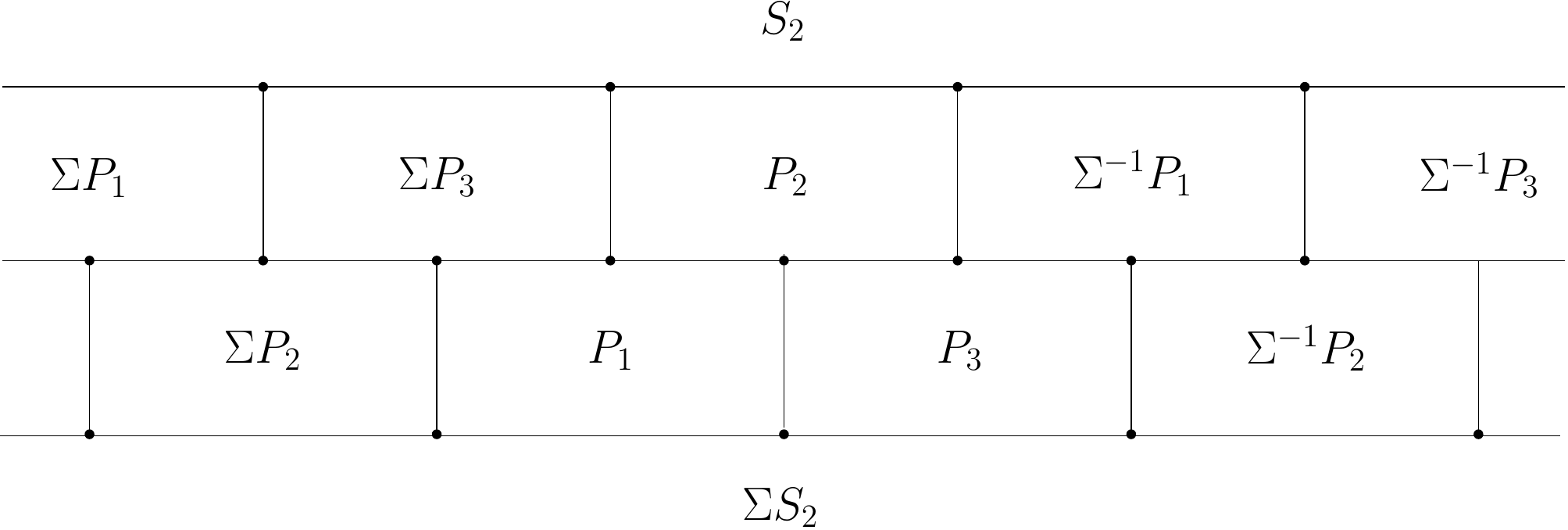}
\caption{Exchange graph of type $\tilde{A}_{1,1}$.}
\end{figure}
Let us compute $\underline{\mathbf{c}}_{ \Sigma^{-1} T_3,T'}$ using the formulas in \thref{c-modules}. Since $T$ is the image of a projective $kQ$-module, we have $\Hom_{\cc}(T,X)\cong \Hom_{kQ}(T,X)$ for every $kQ$-module $X$ (see \thref{H and F morphisms}). Therefore, the morphism in the equation \eqref{negative c-modules} becomes
\begin{equation*} 
\Hom_{kQ}(T, T_1)\oplus\Hom_{kQ}(T,T_1) \longrightarrow \Hom_{kQ}(T,T_3).
\end{equation*} 
Its cokernel is the indecomposable $kQ$-module with dimension vector $(0,1,1)$, thus $\underline{\mathbf{c}}_{ \tau^{-1} T_3,T'}=(0,-1,-1)$.

Now suppose our initial seed is given by the cluster tilting object $T=P_1\oplus \tau S_2 \oplus P_3$. We proceed to compute the family $ \underline{\mathbf{c}}_{S_2} $.  Notice that the complements of $S_2$ are of the form:

\begin{equation*}
\begin{xy} 0;<0.5pt,0pt>:<0pt,-0.5pt>::
(-10,-10) *+{(A_i)},
(265,-10) *+{(B_i)},
(540,-10) *+{(C_i)},
(0,75) *+{\Sigma^iP_1} ="0",
(75,0) *+{S_2} ="1",
(150,75) *+{\Sigma^iP_3} ="2",
(275,75) *+{\Sigma^{i+1}P_2} ="3",
(350,0) *+{S_2} ="4",
(425,75) *+{\Sigma^iP_1} ="5",
(550,75) *+{\Sigma^iP_3} ="6",
(625,0) *+{S_2} ="7",
(700,75) *+{\Sigma^{i-1}P_2,} ="8",
"1", {\ar"0"},
"0", {\ar@<0.5ex>"2"},
"0", {\ar@<-0.5ex>"2"},
"2", {\ar"1"},
"4", {\ar"3"},
"3", {\ar@<0.5ex>"5"},
"3", {\ar@<-0.5ex>"5"},
"5", {\ar"4"},
"7", {\ar"6"},
"6", {\ar@<0.5ex>"8"},
"6", {\ar@<-0.5ex>"8"},
"8", {\ar"7"},
\end{xy}
\end{equation*}
where $i$ is an odd integer. First suppose $T'$ is as in cases $(A_i), (B_i)$ or $(C_i)$ for $i\geq 1$. Note that there are only $H$-morphisms from $T$ to $\Sigma S_2$. Suppose that the morphism in \eqref{negative c-modules} is non-zero for any of the cases mentioned above. Then there exists a commutative diagram
\begin{equation*}
\xymatrix{
&X \ar^h[dr]& \\
T \ar^g[ru] \ar^f[rr]& &\Sigma S_2,
}
\end{equation*}
for every $X\in \lbrace \Sigma^iP_1, \Sigma^{i-1}P_2, \Sigma^iP_3:i\geq 0\rbrace$. By \thref{H and F morphisms} one of $g$ or $h$ is an $F$-morphism. This implies that $f$ is an $F$-morphism which is a contradiction. Thus $\underline{\mathbf{c}}_{S_2,T'}=(-1,-1,-1)$. It is easy to see that in the rest of the cases, a factorization will occur. We obtain in this way $\underline{\mathbf{c}}_{S_2}=\lbrace (-1,-1,-1), (0,-1,-1),(0,-1,0)\rbrace$.


\end{example}

Let $\cc$ be a cluster category with finitely many isomorphism classes of indecomposables. By \thref{cluster-finite case} we know that the (positive) \textbf{c}-vectors are of the form $ \dimv\Hom_{\cc}(T,U)$ where $U$ runs through the indecomposable objects of $\cc$. This already has some nice consequences. For example, we claim that the components of the \textbf{c}-vectors of a cluster-finite quiver $Q$ are bounded by $6$ and that this bound is reached only for quivers of cluster type $E_8$. Indeed, by \thref{H and F morphisms}, it follows that the components are bounded by the maximum of the $\dim \Hom_{\cd^b(Q)}$(L.M), where $L$ and $M$ run through the indecomposable objects of $\cd^{b}(Q)$. Now by Happel's description of $\cd^b(Q)$ \cite{Happel}, these dimensions equal the coefficients of the simple roots in the decomposition of the positive roots in the root system of type $\Delta$, where $\Delta$ is the cluster type of $Q$. It follows that $6$ is indeed the upper bound and that it is reached  only for type $\Delta = E_8$. Let us determine the structure of the quivers of cluster type $E_8$ admitting a \textbf{c}-vector with a component equal to $6$.

\begin{example} (Type $E_8$)
Let $\cc$ be a cluster category of type $E_8$. We know that the coefficients of the positive roots (expressed in terms of the simple roots) in an $E_8$ root system are bounded by $6$. Moreover, 6 can only appear as a coefficient of the simple root associated to the vertex of valency $3$ in $E_8$. We claim that the cluster-finite quivers admitting a \textbf{c}-vector with an entry equal to 6 are those obtained by gluing quivers of cluster type $A_2,\  A_3$ and $A_5$ in a common vertex. To see this, we can use Figure 2, which depicts the AR-quiver of an $E_8$ quiver. The vertices represented by $\Box$ are the modules $Y$ for which $\dim\Hom_{kE_8}(X,Y)=6$. This implies that if a \textbf{c}-vector $\dimv \Hom(T,U)$ associated with the quiver $Q$ of $\End(T)$ has a component equal to $6$, then $T$ contains an indecomposable factor $T_1$ in the orbit of $X$. The other factors $T_j$ satisfy $\Ext^1_{\cc}(T_1,T_j)=0$. Thus, if $T_1$ corresponds to $\ast$ in Figure 2, the factors $T_j$ correspond to vertices $\circ$. We know from \cite{Iyama-Yoshino} that the possible complements of $T_1$ are in bijection with the cluster-tilting objects in the Calabi-Yau reduction $T_1^{\perp}/\langle T_1 \rangle$, where
\begin{equation*}
T_1^{\perp}=\lbrace X\in \cc | \Ext^1(T,X)=0\rbrace
\end{equation*}
and $\langle T_1 \rangle$ is the ideal of morphisms factoring through a sum of copies of $T_1$. We see that $T_1^{\perp}/\langle T_1 \rangle$ is equivalent to the product $\cc_{A_1}\times \cc_{A_2}\times \cc_{A_4} $. The claim follows.
\end{example}

\begin{figure}[htbp!]\label{AR quiver}
\begin{xy} 0;<1pt,0pt>:<0pt,-0.8pt>:: 
(-13,0) *+{\bullet} ="1",
(-13,112) *+{X} ="2",
(-13,196) *+{\bullet} ="3",
(-13,280) *+{\bullet} ="4",
(1,42) *+{\bullet} ="5",
(1,85) *+{\bullet} ="6",
(1,154) *+{\bullet} ="7",
(1,238) *+{\bullet} ="8",
(15,0) *+{\bullet} ="9",
(15,112) *+{\bullet} ="10",
(15,196) *+{\bullet} ="11",
(15,280) *+{\circ} ="12",
(29,42) *+{\bullet} ="13",
(29,85) *+{\bullet} ="14",
(29,154) *+{\bullet} ="15",
(29,238) *+{\circ} ="16",
(43,0) *+{\circ} ="17",
(43,112) *+{\bullet} ="18",
(43,196) *+{\circ} ="19",
(43,280) *+{\circ} ="20",
(57,42) *+{\circ} ="21",
(57,85) *+{\circ} ="22",
(57,154) *+{\circ} ="23",
(57,238) *+{\circ} ="24",
(71,0) *+{\circ} ="25",
(71,112) *+{\star} ="26",
(71,196) *+{\circ} ="27",
(71,280) *+{\circ} ="28",
(85,42) *+{\circ} ="29",
(85,85) *+{\circ} ="30",
(85,154) *+{\circ} ="31",
(85,238) *+{\circ} ="32",
(99,0) *+{\circ} ="33",
(99,112) *+{\bullet} ="34",
(99,196) *+{\circ} ="35",
(99,280) *+{\circ} ="36",
(113,42) *+{\bullet} ="37",
(113,85) *+{\bullet} ="38",
(113,154) *+{\bullet} ="39",
(113,238) *+{\circ} ="40",
(127,0) *+{\bullet} ="41",
(127,112) *+{\Box} ="42",
(127,196) *+{\bullet} ="43",
(127,280) *+{\circ} ="44",
(141,42) *+{\bullet} ="45",
(141,85) *+{\bullet} ="46",
(141,154) *+{\bullet} ="47",
(141,238) *+{\bullet} ="48",
(155,0) *+{\bullet} ="49",
(155,112) *+{\Box} ="50",
(155,196) *+{\bullet} ="51",
(155,280) *+{\bullet} ="52",
(169,42) *+{\bullet} ="53",
(169,85) *+{\bullet} ="54",
(169,154) *+{\bullet} ="55",
(169,238) *+{\bullet} ="56",
(183,0) *+{\bullet} ="57",
(183,112) *+{\Box} ="58",
(183,196) *+{\bullet} ="59",
(183,280) *+{\bullet} ="60",
(197,42) *+{\bullet} ="61",
(197,85) *+{\bullet} ="62",
(197,154) *+{\bullet} ="63",
(197,238) *+{\bullet} ="64",
(211,0) *+{\bullet} ="65",
(211,112) *+{\Box} ="66",
(211,196) *+{\bullet} ="67",
(211,280) *+{\bullet} ="68",
(225,42) *+{\bullet} ="69",
(225,85) *+{\bullet} ="70",
(225,154) *+{\bullet} ="71",
(225,238) *+{\bullet} ="72",
(239,0) *+{\bullet} ="73",
(239,112) *+{\Box} ="74",
(239,196) *+{\bullet} ="75",
(239,280) *+{\bullet} ="76",
(253,42) *+{\bullet} ="77",
(253,85) *+{\bullet} ="78",
(253,154) *+{\bullet} ="79",
(253,238) *+{\bullet} ="80",
(267,0) *+{\bullet} ="81",
(267,112) *+{\bullet} ="82",
(267,196) *+{\bullet} ="83",
(267,280) *+{\bullet} ="84",
(281,42) *+{\bullet} ="85",
(281,85) *+{\bullet} ="86",
(281,154) *+{\bullet} ="87",
(281,238) *+{\bullet} ="88",
(295,0) *+{\bullet} ="89",
(295,112) *+{\bullet} ="90",
(295,196) *+{\bullet} ="91",
(295,280) *+{\bullet} ="92",
(309,42) *+{\bullet} ="93",
(309,85) *+{\bullet} ="94",
(309,154) *+{\bullet} ="95",
(309,238) *+{\bullet} ="96",
(323,0) *+{\bullet} ="97",
(323,112) *+{\bullet} ="98",
(323,196) *+{\bullet} ="99",
(323,280) *+{\bullet} ="100",
(337,42) *+{\bullet} ="101",
(337,85) *+{\bullet} ="102",
(337,154) *+{\bullet} ="103",
(337,238) *+{\bullet} ="104",
(351,0) *+{\bullet} ="105",
(351,112) *+{\bullet} ="106",
(351,196) *+{\bullet} ="107",
(351,280) *+{\bullet} ="108",
(365,42) *+{\bullet} ="109",
(365,85) *+{\bullet} ="110",
(365,154) *+{\bullet} ="111",
(365,238) *+{\bullet} ="112",
(379,0) *+{\bullet} ="113",
(379,112) *+{\bullet} ="114",
(379,196) *+{\bullet} ="115",
(379,280) *+{\bullet} ="116",
(393,42) *+{\bullet} ="117",
(393,85) *+{\bullet} ="118",
(393,154) *+{\bullet} ="119",
(393,238) *+{\bullet} ="120",
"1", {\ar"5"},
"2", {\ar"5"},
"2", {\ar"6"},
"2", {\ar"7"},
"3", {\ar"7"},
"3", {\ar"8"},
"4", {\ar"8"},
"5", {\ar"9"},
"5", {\ar"10"},
"6", {\ar"10"},
"7", {\ar"10"},
"7", {\ar"11"},
"8", {\ar"11"},
"8", {\ar"12"},
"9", {\ar"13"},
"10", {\ar"13"},
"10", {\ar"14"},
"10", {\ar"15"},
"11", {\ar"15"},
"11", {\ar"16"},
"12", {\ar"16"},
"13", {\ar"17"},
"13", {\ar"18"},
"14", {\ar"18"},
"15", {\ar"18"},
"15", {\ar"19"},
"16", {\ar"19"},
"16", {\ar"20"},
"17", {\ar"21"},
"18", {\ar"21"},
"18", {\ar"22"},
"18", {\ar"23"},
"19", {\ar"23"},
"19", {\ar"24"},
"20", {\ar"24"},
"21", {\ar"25"},
"21", {\ar"26"},
"22", {\ar"26"},
"23", {\ar"26"},
"23", {\ar"27"},
"24", {\ar"27"},
"24", {\ar"28"},
"25", {\ar"29"},
"26", {\ar"29"},
"26", {\ar"30"},
"26", {\ar"31"},
"27", {\ar"31"},
"27", {\ar"32"},
"28", {\ar"32"},
"29", {\ar"33"},
"29", {\ar"34"},
"30", {\ar"34"},
"31", {\ar"34"},
"31", {\ar"35"},
"32", {\ar"35"},
"32", {\ar"36"},
"33", {\ar"37"},
"34", {\ar"37"},
"34", {\ar"38"},
"34", {\ar"39"},
"35", {\ar"39"},
"35", {\ar"40"},
"36", {\ar"40"},
"37", {\ar"41"},
"37", {\ar"42"},
"38", {\ar"42"},
"39", {\ar"42"},
"39", {\ar"43"},
"40", {\ar"43"},
"40", {\ar"44"},
"41", {\ar"45"},
"42", {\ar"45"},
"42", {\ar"46"},
"42", {\ar"47"},
"43", {\ar"47"},
"43", {\ar"48"},
"44", {\ar"48"},
"45", {\ar"49"},
"45", {\ar"50"},
"46", {\ar"50"},
"47", {\ar"50"},
"47", {\ar"51"},
"48", {\ar"51"},
"48", {\ar"52"},
"49", {\ar"53"},
"50", {\ar"53"},
"50", {\ar"54"},
"50", {\ar"55"},
"51", {\ar"55"},
"51", {\ar"56"},
"52", {\ar"56"},
"53", {\ar"57"},
"53", {\ar"58"},
"54", {\ar"58"},
"55", {\ar"58"},
"55", {\ar"59"},
"56", {\ar"59"},
"56", {\ar"60"},
"57", {\ar"61"},
"58", {\ar"61"},
"58", {\ar"62"},
"58", {\ar"63"},
"59", {\ar"63"},
"59", {\ar"64"},
"60", {\ar"64"},
"61", {\ar"65"},
"61", {\ar"66"},
"62", {\ar"66"},
"63", {\ar"66"},
"63", {\ar"67"},
"64", {\ar"67"},
"64", {\ar"68"},
"65", {\ar"69"},
"66", {\ar"69"},
"66", {\ar"70"},
"66", {\ar"71"},
"67", {\ar"71"},
"67", {\ar"72"},
"68", {\ar"72"},
"69", {\ar"73"},
"69", {\ar"74"},
"70", {\ar"74"},
"71", {\ar"74"},
"71", {\ar"75"},
"72", {\ar"75"},
"72", {\ar"76"},
"73", {\ar"77"},
"74", {\ar"77"},
"74", {\ar"78"},
"74", {\ar"79"},
"75", {\ar"79"},
"75", {\ar"80"},
"76", {\ar"80"},
"77", {\ar"81"},
"77", {\ar"82"},
"78", {\ar"82"},
"79", {\ar"82"},
"79", {\ar"83"},
"80", {\ar"83"},
"80", {\ar"84"},
"81", {\ar"85"},
"82", {\ar"85"},
"82", {\ar"86"},
"82", {\ar"87"},
"83", {\ar"87"},
"83", {\ar"88"},
"84", {\ar"88"},
"85", {\ar"89"},
"85", {\ar"90"},
"86", {\ar"90"},
"87", {\ar"90"},
"87", {\ar"91"},
"88", {\ar"91"},
"88", {\ar"92"},
"89", {\ar"93"},
"90", {\ar"93"},
"90", {\ar"94"},
"90", {\ar"95"},
"91", {\ar"95"},
"91", {\ar"96"},
"92", {\ar"96"},
"93", {\ar"97"},
"93", {\ar"98"},
"94", {\ar"98"},
"95", {\ar"98"},
"95", {\ar"99"},
"96", {\ar"99"},
"96", {\ar"100"},
"97", {\ar"101"},
"98", {\ar"101"},
"98", {\ar"102"},
"98", {\ar"103"},
"99", {\ar"103"},
"99", {\ar"104"},
"100", {\ar"104"},
"101", {\ar"105"},
"101", {\ar"106"},
"102", {\ar"106"},
"103", {\ar"106"},
"103", {\ar"107"},
"104", {\ar"107"},
"104", {\ar"108"},
"105", {\ar"109"},
"106", {\ar"109"},
"106", {\ar"110"},
"106", {\ar"111"},
"107", {\ar"111"},
"107", {\ar"112"},
"108", {\ar"112"},
"109", {\ar"113"},
"109", {\ar"114"},
"110", {\ar"114"},
"111", {\ar"114"},
"111", {\ar"115"},
"112", {\ar"115"},
"112", {\ar"116"},
"113", {\ar"117"},
"114", {\ar"117"},
"114", {\ar"118"},
"114", {\ar"119"},
"115", {\ar"119"},
"115", {\ar"120"},
"116", {\ar"120"},
\end{xy}
\caption{The AR-quiver of an $E_8$ quiver.}
\end{figure}

\end{document}